\documentclass[11pt, a4paper]{article}
\usepackage{indentfirst}
\usepackage{amsfonts}
\usepackage{amssymb}
\usepackage{mathrsfs}
\usepackage{amsmath}
\usepackage{amsthm}
\usepackage{enumerate}
\usepackage{cite}
\usepackage{mathrsfs}
\usepackage{longtable}
\allowdisplaybreaks
\usepackage{geometry}
\usepackage{ifpdf}
\ifpdf
\usepackage[colorlinks=true, linkcolor=blue, citecolor=red, final, backref=page, hyperindex]{hyperref}
\else
\usepackage[colorlinks, final, backref=page, hyperindex, hypertex]{hyperref}
\fi
\usepackage{longtable}
\usepackage{txfonts}
\usepackage{indentfirst}
\usepackage{latexsym}
\usepackage[all]{xy}

\voffset=- 1.5cm
\def\<{\langle}
\def\>{\rangle}

\newtheorem{thm}{Theorem}[section]

\newtheorem{prop}[thm]{Proposition}

\theoremstyle{definition}
\newtheorem{defn}{Definition}[section]

\theoremstyle{remark}
\newtheorem{re}{Remark}[section]
\begin{document}
	\title{\bf Classification and (Quasi)-Centroids of Four-Dimensional Ternary Leibniz Algebras}
	\author{\bf  Ahmed Zahari Abdou Damdji}
	\author{{
	\ Ahmed Zahari Abdou Damdji$^{1}$
			\footnote { Corresponding author, E-mail: abdou-damdji.ahmed-zahari@uha.fr}.
		}\\
		\\
		{\small  IRIMAS-Department of Mathematics, Faculty of Sciences, University of Haute Alsace, Mulhouse, France.}
	}
	\date{}
	\maketitle
	
	\begin{abstract}
We provide a  classification, up to isomorphism, of four-dimensional ternary Leibniz algebras over an algebraically closed field of characteristic zero. 
For each non-abelian algebra in the classification, we explicitly determine its centroid and quasi-centroid and compute their dimensions. 
These results offer a comprehensive description of the internal symmetries of low-dimensional ternary Leibniz algebras and extend several classical 
results from the binary Leibniz setting to the ternary case.
\end{abstract}

	\textbf{Key words:}\ Ternary Leibniz algebra, classification, Centroid, quasi-centroid. \\

	\numberwithin{equation}{section}
	
	\tableofcontents
\section{Introduction}

Leibniz algebras were introduced by J.-L.~Loday \cite{Loday1993} as a non-antisymmetric generalization of Lie algebras, preserving the Leibniz identity as a weakened
 form of the Jacobi identity. Since their introduction, Leibniz algebras have attracted considerable attention due to their rich algebraic structure and their close 
connections with homological algebra, geometry, and theoretical physics see\cite{Loday1996, Casas2012}. In recent years, increasing 
interest has been devoted to $n$-ary generalizations of classical algebraic structures, motivated both by abstract algebraic considerations and by applications in 
mathematical physics, particularly in the study of Nambu mechanics and higher-order symmetries \cite{Filippov1985, Takhtajan1994a, Takhtajan1994b}.

Among these generalizations, ternary Leibniz algebras arise naturally as a non-skew-symmetric counterpart of ternary (or $3$-Lie) algebras. A ternary Leibniz algebra
 is a vector space equipped with a trilinear operation satisfying a Leibniz-type identity that generalizes the derivation property of binary Leibniz algebras.
 These structures extend the class of ternary Lie algebras and provide a broader framework in which symmetry assumptions are relaxed, allowing for a richer variety
 of algebraic behaviors. Ternary and, more generally, $n$-ary Leibniz-type structures have been investigated from both algebraic and cohomological perspectives 
\cite{Makhlouf2020, ZahariBakayoko2023, Casas2014, Guo2016}.

The classification problem plays a fundamental role in understanding algebraic structures of low dimension. Low-dimensional classifications not only provide explicit 
examples and counterexamples, but also serve as a testing ground for conjectures and structural properties in higher dimensions. In the context of ternary Leibniz 
algebras, complete classifications remain relatively limited, particularly in dimensions greater than three. Therefore, the classification of $4$-dimensional ternary 
Leibniz algebras represents a natural and meaningful step toward a deeper understanding of these algebras and their structural features.

Another important aspect in the study of algebraic structures is the investigation of their centroids and quasi-centroids. The centroid of an algebra consists of
 linear maps that commute with the algebra multiplication, while the quasi-centroid generalizes this notion by relaxing certain commutativity conditions. 
These objects provide valuable insight into the internal symmetries of the algebra and are closely related to derivations, automorphisms, and deformation 
theory \cite{Leger2000, Albeverio2011, ZahariSania2025}. For Leibniz and $n$-ary algebras, quasi-centroids play a crucial role in describing invariant operators, structural decompositions, 
and symmetry-related properties \cite{Sheng2016, Mosbahi2023, Zahari2019, Beatrice2025, Mosbahi2025, ZahariBouzid2024}.

The aim of this paper is twofold. First, we provide a complete classification, up to isomorphism, of $4$-dimensional ternary Leibniz algebras over an algebraically 
closed field of characteristic zero. Second, for each algebra appearing in the classification, we explicitly determine its quasi-centroid, highlighting how these structures
 reflect the underlying algebraic properties. Our results contribute to the growing body of work on $n$-ary Leibniz algebras and offer a foundation for further studies on 
derivations, deformations, and cohomology in higher-dimensional and higher-arity settings.

The paper is organized as follows. Section~2 is devoted to the classification of $4$-dimensional ternary Leibniz algebras. 
 In Section~3, we recall basic definitions and preliminary results concerning ternary Leibniz algebras and quasi-centroids.
In Section~4, we compute and analyze the quasi-centroids of the classified algebras.  Finally, concluding remarks and perspectives for future research are presented.

\section{Classifications of ternary Leibniz algebras}

In this section, we give the classification of Ternary Leibniz algebra in low dimension.

 For a selection of cases, we classify the ternary Leibniz algebras. The computations for our classifications were done using Mathematica. 
Throughout, we work over the complex field.	

\begin{defn}\label{dt}
A \emph{ternary Leibniz algebra} is a vector space $\mathcal{L}$ over a field $\mathbb{K}$ endowed with a trilinear map
$\llbracket -,-,- \rrbracket : \mathcal{L}^{\otimes 3} \to \mathcal{L}$
satisfying the Leibniz identity
\[
\llbracket \llbracket x,y,z \rrbracket , t , u \rrbracket
=
\llbracket x,y,\llbracket z,t,u \rrbracket \rrbracket
+
\llbracket x,\llbracket y,t,u \rrbracket , z \rrbracket
+
\llbracket \llbracket x,t,u \rrbracket , y , z \rrbracket
\]
for all $x,y,z,t,u \in \mathcal{L}$.
\end{defn}

Let $(\mathcal{L}, \llbracket-,-,-\rrbracket)$ be an $n$-dimensional Ternary Leibniz algebra, $\{e_i\}$ be a basis of $\mathcal{L}$. For any $i, j\in \mathbb{N},
 1\leq i, j\leq n$, let us put 
$$\llbracket e_i, e_j, e_k\rrbracket=\sum_{p=1}^{n}\chi_{ijk}^pe_p$$
The axioms in Definition (\ref{dt}) are respectively equivalent to
\begin{eqnarray}
\sum_{r=1}^n\Bigg(\chi_{ijk}^r\chi_{rpq}^s-\chi_{kpq}^r\chi_{ijr}^s-\chi_{jpq}^r\chi_{irk}^s-\chi_{ipq}^r\chi_{rjk}^s\Bigg)&=&0.
\end{eqnarray}

\begin{thm}
The isomorphism class of 4-dimensional ternary Leibniz algebras given by  the following representatives.
\begin{itemize}
\item
$\mathcal{L}_1$ :
 $\begin{array}{ll}  
\llbracket e_1,e_1, e_1\rrbracket=e_2+e_4,\quad \llbracket e_1,e_1, e_3\rrbracket=e_2+e_4, \,\llbracket e_1,e_3, e_1\rrbracket=e_2+e_4,\,
\llbracket e_3,e_1, e_1\rrbracket=e_2+e_4,\\ \llbracket e_3,e_1, e_3\rrbracket=e_2+e_4,\quad \llbracket e_3,e_3, e_3\rrbracket=e_2+e_4;
\end{array}$
\item
$\mathcal{L}_2$ :
 $\begin{array}{ll}  
\llbracket e_1,e_1, e_2\rrbracket=e_3+e_4,\quad \llbracket e_1,e_2, e_1\rrbracket=2e_3+e_4, \,\llbracket e_2,e_1, e_1\rrbracket=e_3+e_4,\,
\llbracket e_2,e_1, e_2\rrbracket=e_3+e_4,\\ \llbracket e_2,e_2, e_2\rrbracket=e_3+e_4;
\end{array}$

\item
$\mathcal{L}_3$ :
 $\begin{array}{ll}  
\llbracket e_1,e_1, e_4\rrbracket=e_3,\ \llbracket e_1,e_2, e_2\rrbracket=e_3, \,\llbracket e_1,e_2, e_4\rrbracket=e_3,\,
\llbracket e_1,e_4, e_4\rrbracket=e_3,\,\llbracket e_2,e_1, e_1\rrbracket=e_3,\\
\llbracket e_2,e_2, e_1\rrbracket=e_3,\ \llbracket e_2,e_2, e_4\rrbracket=e_3, \,\llbracket e_2,e_4, e_4\rrbracket=e_3,\,
\llbracket e_4,e_1, e_1\rrbracket=e_3,\,\llbracket e_4,e_2, e_2\rrbracket=e_3,\\
\llbracket e_4,e_4, e_1\rrbracket=e_3,\quad\llbracket e_4,e_4, e_4\rrbracket=e_3;
\end{array}$

\item
$\mathcal{L}_4$ :
 $\begin{array}{ll}  
\llbracket e_1,e_2, e_2\rrbracket=e_1+e_3,\,\llbracket e_1,e_4, e_4\rrbracket=e_1+e_3,\,\llbracket e_2,e_2, e_2\rrbracket=e_3,\,
\llbracket e_2,e_2, e_4\rrbracket=e_3,\,\llbracket e_2,e_4, e_2\rrbracket=e_3,\\
\llbracket e_2,e_4, e_4\rrbracket=e_3,\, \llbracket e_4,e_2, e_2\rrbracket=e_3, \,\llbracket e_4,e_2, e_4\rrbracket=e_3,\,
\llbracket e_4,e_4, e_2\rrbracket=e_3,\,\llbracket e_4,e_4, e_4\rrbracket=e_3;
\end{array}$

\item
$\mathcal{L}_5$ :
 $\begin{array}{ll}  
\llbracket e_1,e_2, e_3\rrbracket=e_4,\,  \llbracket e_1,e_3, e_1\rrbracket=e_4,\,\llbracket e_1,e_3, e_2\rrbracket=e_4,\, 
\llbracket e_1,e_3, e_3\rrbracket=e_4,\, \llbracket e_2,e_1, e_3\rrbracket=e_4,\\
\llbracket e_2,e_3, e_1\rrbracket=e_4,\,\llbracket e_3,e_1, e_3\rrbracket=e_4, \, \llbracket e_3,e_2, e_1\rrbracket=e_4,\, 
\llbracket e_3,e_3, e_1\rrbracket=e_4;
\end{array}$

\item
$\mathcal{L}_6$ :
 $\begin{array}{ll}  
\llbracket e_1,e_2, e_4\rrbracket=e_1+e_3,\,\llbracket e_1,e_4, e_2\rrbracket=e_1+e_3,\,\, \llbracket e_2,e_2, e_2\rrbracket=e_3,\,
\llbracket  e_2,e_2, e_4\rrbracket=e_3,\\\ \llbracket  e_2,e_4, e_2\rrbracket=e_3,\,\llbracket e_4,e_2, e_2\rrbracket=\frac{1}{2}e_3,\,
\llbracket e_4,e_4, e_2\rrbracket=e_3,\quad \llbracket e_4,e_4, e_4\rrbracket=e_3;
\end{array}$

\item
$\mathcal{L}_7^\alpha$ :
 $\begin{array}{ll}  
\llbracket e_1,e_3, e_2\rrbracket=2\alpha e_1+e_4,\, \llbracket e_2,e_2, e_2\rrbracket=e_4,\, \llbracket e_2,e_3, e_2\rrbracket=e_4,\,
\llbracket  e_2,e_3, e_3\rrbracket=e_4,\\ \llbracket e_3,e_2, e_2\rrbracket=e_4,\,\llbracket e_3,e_2,e_3\rrbracket=e_4,\quad
 \llbracket e_3,e_3, e_2\rrbracket=e_4,\quad \llbracket e_3,e_3, e_3\rrbracket=\frac{1}{3}e_4;
\end{array}$

\item
$\mathcal{L}_8$ :
 $\begin{array}{ll}  
\llbracket e_1,e_1, e_1\rrbracket=e_2,\,\llbracket e_1,e_3, e_3\rrbracket=e_2,\,\llbracket e_3,e_3, e_1\rrbracket=e_2,\,
\llbracket  e_3,e_3, e_3\rrbracket=e_2,\quad \llbracket e_4,e_1, e_1\rrbracket=e_2,\\
\llbracket e_4,e_4,e_4\rrbracket=e_2;
\end{array}$

\item
$\mathcal{L}_9$ :
 $\begin{array}{ll}  
\llbracket e_1,e_1, e_1\rrbracket=e_2,\,\llbracket e_1,e_4, e_1\rrbracket=e_2,\,\llbracket e_1,e_4, e_4\rrbracket=e_2,\,\,
\llbracket  e_3,e_3, e_1\rrbracket=e_2,\, \llbracket e_4,e_1, e_1\rrbracket=e_2,\\
\llbracket e_4,e_3,e_3\rrbracket=e_2,\quad \llbracket e_4,e_4,e_4\rrbracket=e_2;
\end{array}$

\item
$\mathcal{L}_{10}$ :
 $\begin{array}{ll}  
\llbracket e_2,e_1, e_1\rrbracket=e_2,\, \llbracket e_2,e_3, e_2\rrbracket=e_2,\,\llbracket e_2,e_3, e_4\rrbracket=e_2,\,
\llbracket  e_2,e_4, e_3\rrbracket=e_2,\, \llbracket e_2,e_4, e_4\rrbracket=e_2;
\end{array}$

\item
$\mathcal{L}_{11}$ :
 $\begin{array}{ll}  
\llbracket e_2,e_1, e_1\rrbracket=e_4,\,\llbracket e_2,e_1, e_2\rrbracket=e_4,\,\llbracket e_2,e_2, e_1\rrbracket=e_4,\,
\llbracket  e_2,e_2, e_2\rrbracket=e_4,\, \llbracket e_3,e_3, e_3\rrbracket=e_4;
\end{array}$

\item
$\mathcal{L}_{12}$ :
 $\begin{array}{ll}  
\llbracket e_2,e_1, e_2\rrbracket=e_3,\, \llbracket e_2,e_2, e_1\rrbracket=e_3,\,\llbracket e_2,e_2, e_2\rrbracket=e_3,\,
\llbracket  e_2,e_2, e_4\rrbracket=e_3,\,\llbracket e_2,e_4, e_2\rrbracket=e_3,\\
\llbracket e_4,e_1, e_1\rrbracket=e_3,\quad\llbracket  e_4,e_1, e_4\rrbracket=e_3,\quad \llbracket  e_4,e_2, e_2\rrbracket=e_3,
\quad \llbracket  e_4,e_4, e_4\rrbracket=e_3;
\end{array}$

\item
$\mathcal{L}_{13}$ :
 $\begin{array}{ll}  
\llbracket e_2,e_1, e_3\rrbracket=e_2,\,\llbracket e_2,e_3, e_1\rrbracket=e_2,\,\llbracket e_2,e_3, e_4\rrbracket=e_2,\,
\llbracket  e_2,e_4, e_3\rrbracket=e_2,\quad \llbracket e_2,e_4, e_4\rrbracket=e_2;
\end{array}$

\item
$\mathcal{L}_{14}$ :
 $\begin{array}{ll}  
\llbracket e_2,e_1, e_3\rrbracket=e_4,\,\llbracket e_2,e_2, e_2\rrbracket=e_4,\,\llbracket e_2,e_3, e_1\rrbracket=e_4,\,
\llbracket  e_3,e_1, e_1\rrbracket=e_4,\,\llbracket e_3,e_1, e_2\rrbracket=e_4,\\
\llbracket e_3,e_2, e_1\rrbracket=e_4,\quad\llbracket  e_3,e_3, e_3\rrbracket=e_4;
\end{array}$

\item
$\mathcal{L}_{15}$ :
 $\begin{array}{ll}  
\llbracket e_2,e_3, e_1\rrbracket=e_2,\quad \llbracket e_2,e_3, e_3\rrbracket=e_2,\quad\llbracket e_2,e_4, e_3\rrbracket=e_2,\quad\llbracket  e_2,e_4, e_4\rrbracket=e_2.
\end{array}$

\item
$\mathcal{L}_{16}$ :
 $\begin{array}{ll}  
\llbracket e_2,e_2, e_1\rrbracket=e_4,\quad \llbracket e_2,e_2, e_2\rrbracket=e_4,\quad\llbracket e_3,e_2, e_2\rrbracket=e_4,\quad
\llbracket  e_3,e_3, e_3\rrbracket=e_4;
\end{array}$

\item
$\mathcal{L}_{17}$ :
 $\begin{array}{ll}  
\llbracket e_2,e_2, e_3\rrbracket=e_4,\quad \llbracket e_3,e_2, e_1\rrbracket=e_4,\quad\llbracket e_3,e_2, e_2\rrbracket=e_4,\quad\llbracket  e_3,e_3, e_3\rrbracket=e_4;
\end{array}$

\item
$\mathcal{L}_{18}^\alpha$ :
 $\begin{array}{ll}  
\llbracket e_2,e_1, e_4\rrbracket=e_3,\,\llbracket e_2,e_2, e_2\rrbracket=e_3,\,\llbracket e_2,e_4, e_1\rrbracket=e_3,\,
\llbracket  e_4,e_1, e_2\rrbracket=-e_3,\, \llbracket e_4,e_2, e_1\rrbracket=e_3,\\
\llbracket e_4,e_2, e_2\rrbracket=\alpha e_3,\quad\llbracket  e_4,e_4, e_4\rrbracket=e_3.
\end{array}$
\end{itemize}
\end{thm}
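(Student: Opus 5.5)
The plan has three parts: verify that each listed bracket is a ternary Leibniz algebra, show the list is complete up to isomorphism, and show the listed algebras are pairwise non-isomorphic. The paper says the computations were done with Mathematica, so I would organise the work so that each step reduces to a polynomial system that can be handled symbolically.

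\textbf{Step 1: each $\mathcal{L}_i$ is a ternary Leibniz algebra.} For each $\mathcal{L}_i$ (including $\mathcal{L}_7^\alpha$ and $\mathcal{L}_{18}^\alpha$ for all $\alpha$), substitute its structure constants $\chi_{ijk}^p$ into the polynomial identities equivalent to Definition \ref{dt}. These are the displayed equations
\[
\sum_{r}\bigl(\chi_{ijk}^r\chi_{rpq}^s-\chi_{kpq}^r\chi_{ijr}^s-\chi_{jpq}^r\chi_{irk}^s-\chi_{ipq}^r\chi_{rjk}^s\bigr)=0 .
\]
Many cases collapse quickly. Whenever the image of the bracket lies in a subspace $V$ with $\llbracket V,\mathcal{L},\mathcal{L}\rrbracket=\llbracket\mathcal{L},V,\mathcal{L}\rrbracket=\llbracket\mathcal{L},\mathcal{L},V\rrbracket=0$, every term of the identity vanishes. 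This happens for $\mathcal{L}_1,\mathcal{L}_3,\mathcal{L}_5,\mathcal{L}_8$, and for several others once one checks that $e_4$ (resp.\ $e_3$, $e_2$) is central. Only the algebras with a non-nilpotent part ($\mathcal{L}_4,\mathcal{L}_6,\mathcal{L}_7^\alpha,\mathcal{L}_{10},\mathcal{L}_{13},\mathcal{L}_{15}$) need genuine checks, and these are finite.

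\textbf{Step 2: completeness.} Consider a general trilinear bracket on $\mathbb{C}^4$ with unknown constants $\chi_{ijk}^p$, impose the Leibniz system, and decompose the solution variety into components. I would stratify by invariants preserved under $GL_4$: $\dim\llbracket\mathcal{L},\mathcal{L},\mathcal{L}\rrbracket$, the dimension of the (left/right/middle) annihilators, and whether the derived algebra acts nilpotently on $\mathcal{L}$.

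In the nilpotent case with one-dimensional derived algebra $\langle v\rangle$ central, the bracket is a trilinear form on $\mathcal{L}/\langle v\rangle\cong\mathbb{C}^3$ with values in $\mathbb{C}$. Classification then amounts to orbits of such forms under the appropriate stabilizer group, normalized by explicit changes of basis. In the non-nilpotent cases, choose a basis adapted to the eigenvalues of the operators $\llbracket -,y,z\rrbracket$ restricted to the derived algebra. Rescaling by $e_i\mapsto\lambda_ie_i$ then yields the normal forms, with the parameter $\alpha$ surviving in $\mathcal{L}_7^\alpha$ and $\mathcal{L}_{18}^\alpha$.

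\textbf{Step 3: pairwise non-isomorphism.} Compare computable invariants: dimension of the derived algebra, of the annihilators, of the derivation algebra, and the dimensions of the centroid and quasi-centroid computed in Section 4. Where these coincide, solve directly for an isomorphism $\varphi$ satisfying $\varphi\llbracket x,y,z\rrbracket=\llbracket\varphi x,\varphi y,\varphi z\rrbracket$ and show the system is inconsistent. The same computation shows that different values of $\alpha$ give non-isomorphic algebras.

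\textbf{Main obstacle.} The hard step is Step 2, specifically the orbit classification of trilinear forms $\mathbb{C}^3\times\mathbb{C}^3\times\mathbb{C}^3\to\mathbb{C}$ with no symmetry assumptions. This is a wild-looking problem: $27$ parameters against a $9$-dimensional group, so generic orbits come in families. Completeness therefore cannot be a finite list unless the Leibniz identity cuts the variety down drastically. I would first determine exactly which such forms the identity permits, expecting it to impose no condition when the value space is central. If that is the case, infinitely many parameters survive, and the statement must be read as a list of representative classes found by computer rather than a complete finite classification. Establishing or refuting completeness at this step is where I expect the argument to need the most care.
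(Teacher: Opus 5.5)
Your plan cannot be completed, because the theorem is false as a classification. Your last paragraph already contains the reason, but you leave it as an expectation instead of drawing the conclusion. The paper itself gives no argument beyond saying the computations were done in Mathematica.

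\textbf{Completeness fails.} The vanishing you anticipate is immediate. If $\llbracket\mathcal{L},\mathcal{L},\mathcal{L}\rrbracket$ is annihilated in all three slots, then each of the four terms of the identity contains a bracket in some slot and is zero. So for every trilinear form $\omega$ on $W=\mathbb{C}^3$, the space $\mathcal{L}_\omega=W\oplus\mathbb{C}v$ with $\llbracket w_1+a_1v,w_2+a_2v,w_3+a_3v\rrbracket=\omega(w_1,w_2,w_3)\,v$ is a ternary Leibniz algebra.

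For $\omega\neq 0$, an isomorphism $\mathcal{L}_\omega\to\mathcal{L}_{\omega'}$ must preserve $\mathbb{C}v=\llbracket\mathcal{L}_\omega,\mathcal{L}_\omega,\mathcal{L}_\omega\rrbracket$. It therefore forces $\omega'=\lambda\,\omega(g^{-1}-,g^{-1}-,g^{-1}-)$ for some $(g,\lambda)\in GL_3\times\mathbb{C}^*$. The subgroup $\{(cI,c^3)\}$ acts trivially, so orbits have dimension at most $9$ in the $27$-dimensional space of forms. Hence these algebras alone come in families with at least $18$ continuous parameters. The sixteen algebras and two one-parameter families of the list account for at most a $10$-dimensional set of forms, so infinitely many classes are missing.

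A concrete missing algebra, needing no orbit count, is Filippov's simple $3$-Lie algebra $\llbracket e_i,e_j,e_k\rrbracket=\sum_l\epsilon_{ijkl}e_l$.
\begin{itemize}
\item Since $\langle\llbracket x,t,u\rrbracket,y\rangle=\det(x,t,u,y)$, every right multiplication $\llbracket -,t,u\rrbracket$ lies in $\mathfrak{so}_4(\mathbb{C})$.
\item The product is $SO_4(\mathbb{C})$-equivariant, so every element of $\mathfrak{so}_4(\mathbb{C})$, and in particular every right multiplication, is a derivation. That is exactly the identity of Definition~\ref{dt}.
\item Its product is surjective, whereas every listed algebra has $\dim\llbracket\mathcal{L},\mathcal{L},\mathcal{L}\rrbracket\le 2$.
\end{itemize}

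\textbf{Step~1 also fails.} So even the weaker reading you fall back on (the list consists of genuine ternary Leibniz algebras, found by computer) is false: $\mathcal{L}_{10}$ violates the identity. Take $x=z=e_2$, $y=e_3$, $t=u=e_1$.
\begin{itemize}
\item The left side is $\llbracket\llbracket e_2,e_3,e_2\rrbracket,e_1,e_1\rrbracket=\llbracket e_2,e_1,e_1\rrbracket=e_2$.
\item The right side is $\llbracket e_2,e_3,e_2\rrbracket+0+\llbracket e_2,e_3,e_2\rrbracket=2e_2$. The middle term vanishes because $\llbracket e_3,e_1,e_1\rrbracket=0$.
\end{itemize}
The underlying reason is that $\llbracket -,e_1,e_1\rrbracket$ fixes $e_2$, and $e_2$ occurs in two slots of $\llbracket e_2,e_3,e_2\rrbracket=e_2$. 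The other non-central entries ($\mathcal{L}_4,\mathcal{L}_6,\mathcal{L}_7^\alpha,\mathcal{L}_{13},\mathcal{L}_{15}$) do pass, and the central ones pass trivially, as you say.

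For Step~3, do not rely on the centroid and quasi-centroid table of Section~4 as a source of invariants. It is internally inconsistent; for instance, it assigns dimension $9$ to a space of scalar matrices.

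The correct write-up here is a disproof: exhibit the failure of $\mathcal{L}_{10}$ and an algebra missing from the list, such as the one above.
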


\section{Quasi-Centroids of Ternary Leibniz Algebras}

\begin{defn}
Let $(\mathcal{L},\llbracket-,-,-\rrbracket)$ be a ternary Leibniz algebra over a field $\mathbb{K}$.
The centroid of $\mathcal{L}$ is defined as
$$
Cent(\mathcal{L})=\left\{\Phi \in End_{\mathbb{K}}(\mathcal{L}) \ \middle| \, \Phi(\llbracket x,y,z\rrbracket)
=\llbracket \Phi(x),y,z\rrbracket=\llbracket x,\Phi(y),z\rrbracket=\llbracket x,y,\Phi(z)\rrbracket,
\ \forall x,y,z \in \mathcal{L}
\right\}.
$$
\end{defn}
We denote the centroid of $\mathcal{L}$ by $\Gamma(\mathcal{L})$.
\begin{prop}
Let $(\mathcal{L},\llbracket-,-,-\rrbracket)$ be a ternary Leibniz algebra.
Then $Cent(\mathcal{L})$ is a subalgebra of $End_{\mathbb{K}}(\mathcal{L})$, that is,
it is closed under addition, scalar multiplication, and composition.
\end{prop}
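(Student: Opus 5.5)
The plan is a direct verification from the definition of $Cent(\mathcal{L})$, using only that the bracket $\llbracket -,-,-\rrbracket$ is trilinear and that composition in $End_{\mathbb{K}}(\mathcal{L})$ is associative. I will check, in turn, that $Cent(\mathcal{L})$ contains $0$ and $\mathrm{id}$, is closed under addition and scalar multiplication, and is closed under composition.

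\textbf{Linear structure.} For $\Phi,\Psi\in Cent(\mathcal{L})$ and $\lambda\in\mathbb{K}$, I expand each of the four expressions
$$(\Phi+\lambda\Psi)(\llbracket x,y,z\rrbracket),\quad \llbracket (\Phi+\lambda\Psi)(x),y,z\rrbracket,\quad \llbracket x,(\Phi+\lambda\Psi)(y),z\rrbracket,\quad \llbracket x,y,(\Phi+\lambda\Psi)(z)\rrbracket$$
using linearity of $\Phi,\Psi$ and linearity of the bracket in the relevant slot. Each expression splits into a $\Phi$-part plus $\lambda$ times a $\Psi$-part. The four $\Phi$-parts agree, as do the four $\Psi$-parts, by the defining identities. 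Hence $Cent(\mathcal{L})$ is a linear subspace. It contains $0$, and it contains the identity map, which trivially satisfies all the equalities.

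\textbf{Composition.} This is the only step that is not completely mechanical, because the argument has to move $\Phi$ and $\Psi$ between slots. For $\Phi,\Psi\in Cent(\mathcal{L})$ I compute
$$\Phi\Psi(\llbracket x,y,z\rrbracket)=\Phi(\llbracket \Psi(x),y,z\rrbracket)=\llbracket \Phi\Psi(x),y,z\rrbracket.$$
For the second slot I move $\Psi$ to the second slot first, then $\Phi$:
$$\Phi(\llbracket x,\Psi(y),z\rrbracket)=\llbracket x,\Phi\Psi(y),z\rrbracket.$$
The third slot is handled in the same way. All three bracket expressions equal $\Phi\Psi(\llbracket x,y,z\rrbracket)$, so $\Phi\circ\Psi\in Cent(\mathcal{L})$.

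The key point is that at each stage $\Phi$ is applied to an element of the form $\llbracket a,b,c\rrbracket$. At that moment the defining identity of $\Phi$ lets us push $\Phi$ into whichever slot we want, independently of where $\Psi$ sits. No commutation between $\Phi$ and $\Psi$ is needed.

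Associativity of composition and distributivity are inherited from $End_{\mathbb{K}}(\mathcal{L})$. This shows $Cent(\mathcal{L})$ is a unital subalgebra.
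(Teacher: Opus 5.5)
Your proof is correct and follows essentially the same route as the paper. Both arguments obtain closure under linear combinations from trilinearity of the bracket. Both obtain closure under composition by first pushing the inner map into the desired slot and then applying the centroid identity of the outer map to the resulting bracket. Your extra observation that $\mathrm{id}\in Cent(\mathcal{L})$, so that the subalgebra is unital, is a harmless addition that the paper does not state.
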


\begin{proof}
Let $\Phi_1,\Phi_2 \in Cent(\mathcal{L})$ and $\alpha,\beta \in \mathbb{K}$.
For all $x,y,z \in \mathcal{L}$, by linearity we have
$$
(\alpha\Phi_1+\beta\Phi_2)(\llbracket x,y,z\rrbracket)=\alpha \llbracket \Phi_1(x),y,z\rrbracket
+\beta \llbracket \Phi_2(x),y,z\rrbracket=\llbracket (\alpha\Phi_1+\beta\Phi_2)(x),y,z\rrbracket.
$$
The same equality holds when $\alpha\Phi_1+\beta\Phi_2$ acts on the second or third argument, hence $\alpha\Phi_1+\beta\Phi_2 \in Cent(\mathcal{L})$.

Moreover,
$$
(\Phi_1 \circ \Phi_2)(\llbracket x,y,z\rrbracket)=\Phi_1(\llbracket \Phi_2(x),y,z\rrbracket)
=\llbracket (\Phi_1 \circ \Phi_2)(x),y,z\rrbracket,
$$
and similarly for the other arguments.
Thus $\Phi_1 \circ \Phi_2 \in Cent(\mathcal{L})$.
\end{proof}

\begin{defn}\label{qc}
Let $(\mathcal{L},\llbracket-,-,-\rrbracket)$ be a ternary Leibniz algebra over $\mathbb{K}$.
The quasi-centroid of $\mathcal{L}$ is defined by
$$
QCent(\mathcal{L})=\left\{\Phi \in End_{\mathbb{K}}(\mathcal{L}) \ \middle| \
\llbracket \Phi(x),y,z\rrbracket
=\llbracket x,\Phi(y),z\rrbracket=\llbracket x,y,\Phi(z)\rrbracket,
\ \forall x,y,z \in \mathcal{L}
\right\}.
$$
\end{defn}

\begin{prop}\label{pc1}
Let $(\mathcal{L},\llbracket-,-,-\rrbracket)$ be a ternary Leibniz algebra over $\mathbb{K}$.
Then:
\begin{enumerate}
\item[(i)] $QCent(\mathcal{L})$ is a vector subspace of $End_{\mathbb{K}}(\mathcal{L})$.
\item[(ii)] $Cent(\mathcal{L}) \subseteq QCent(\mathcal{L})$.
\item[(iii)] If $\mathcal{L}$ is a commutative ternary algebra, then
$QCent(\mathcal{L}) = Cent(\mathcal{L})$.
\item[(iv)] If the ternary product is associative, then $QCent(\mathcal{L})$ is closed under composition.
\end{enumerate}
\end{prop}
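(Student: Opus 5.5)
Parts (i), (ii) and (iv) follow directly from Definition~\ref{qc}, by moving $\Phi$ between the slots of the bracket. Part (iii) is where the real content, and the main difficulty, lies.

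For (i), take $\Phi_1,\Phi_2\in QCent(\mathcal{L})$ and $\alpha,\beta\in\mathbb{K}$. Trilinearity gives
\[
\llbracket(\alpha\Phi_1+\beta\Phi_2)(x),y,z\rrbracket=\alpha\llbracket x,\Phi_1(y),z\rrbracket+\beta\llbracket x,\Phi_2(y),z\rrbracket=\llbracket x,(\alpha\Phi_1+\beta\Phi_2)(y),z\rrbracket .
\]
The same computation works for the third slot, and $0\in QCent(\mathcal{L})$. For (ii), if $\Phi\in Cent(\mathcal{L})$, then the three expressions $\llbracket \Phi(x),y,z\rrbracket$, $\llbracket x,\Phi(y),z\rrbracket$, $\llbracket x,y,\Phi(z)\rrbracket$ all equal $\Phi(\llbracket x,y,z\rrbracket)$, so they are equal to each other. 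We simply forget the first equality in the definition of the centroid.

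For (iv), I will prove closure under composition directly, using only the defining relations of $QCent(\mathcal{L})$. Each relation will be applied to suitably substituted arguments. For $\Phi_1,\Phi_2\in QCent(\mathcal{L})$, first apply the $\Phi_1$-relation, then the $\Phi_2$-relation, then the $\Phi_1$-relation at the point $(x,\Phi_2(y),z)$:
\[
\llbracket \Phi_1\Phi_2(x),y,z\rrbracket=\llbracket \Phi_2(x),y,\Phi_1(z)\rrbracket=\llbracket x,\Phi_2(y),\Phi_1(z)\rrbracket=\llbracket x,\Phi_1\Phi_2(y),z\rrbracket .
\]
Symmetrically,
\[
\llbracket x,y,\Phi_1\Phi_2(z)\rrbracket=\llbracket \Phi_1(x),y,\Phi_2(z)\rrbracket=\llbracket \Phi_1(x),\Phi_2(y),z\rrbracket=\llbracket x,\Phi_2(y),\Phi_1(z)\rrbracket .
\]
This is the same middle expression as before, so all three slots agree and $\Phi_1\circ\Phi_2\in QCent(\mathcal{L})$. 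The associativity hypothesis is therefore more than needed. I will remark that it is consistent with, but not used by, this argument.

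For (iii), take ``commutative'' to mean that $\llbracket x_{\sigma(1)},x_{\sigma(2)},x_{\sigma(3)}\rrbracket=\llbracket x_1,x_2,x_3\rrbracket$ for every permutation $\sigma$. The inclusion $Cent\subseteq QCent$ is (ii), so only the reverse inclusion needs work. Let $\Phi\in QCent(\mathcal{L})$. The goal is $\Phi(\llbracket x,y,z\rrbracket)=\llbracket \Phi(x),y,z\rrbracket$. My first attempt will use the Leibniz identity of Definition~\ref{dt} with $\Phi$ inserted in one slot, say
\[
\llbracket\llbracket x,y,z\rrbracket,\Phi(t),u\rrbracket=\llbracket \Phi\llbracket x,y,z\rrbracket,t,u\rrbracket .
\]
I would expand both sides with the identity, use full symmetry to move $\Phi$ freely among the inner slots, and try to isolate $\Phi(\llbracket x,y,z\rrbracket)-\llbracket\Phi(x),y,z\rrbracket$.

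I expect this to be the main obstacle. The argument only shows that $D=\Phi(\llbracket x,y,z\rrbracket)-\llbracket\Phi(x),y,z\rrbracket$ satisfies $\llbracket D,t,u\rrbracket=0$, that is, $D$ lies in the annihilator. So the final step requires the annihilator of $\mathcal{L}$ to vanish. If that step cannot be closed, I would test a small symmetric example, such as $\llbracket e_1,e_1,e_1\rrbracket=e_2$ with $\Phi=E_{21}$. Here $E_{21}$ is the map sending $e_1\mapsto e_2$ and $e_2\mapsto 0$. This checks whether (iii) as worded actually holds.
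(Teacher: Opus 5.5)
\emph{Verdict: your proposal has a genuine gap at (iii). Parts (i), (ii) and (iv) are fine, but (iii) is left unresolved, and it is in fact false.}

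\textbf{Parts (i), (ii), (iv).} Parts (i) and (ii) are correct and agree with the paper. For (iv) your argument is correct and takes a different route from the paper. The paper only says ``using associativity of the ternary product'' and writes down the conclusion. You instead show, by three applications of the defining relations each, that both $\llbracket\Phi_1\Phi_2(x),y,z\rrbracket$ and $\llbracket x,y,\Phi_1\Phi_2(z)\rrbracket$ equal $\llbracket x,\Phi_2(y),\Phi_1(z)\rrbracket$. All six moves are legitimate instances of the quasi-centroid relation. This gives a complete proof that $QCent(\mathcal{L})$ is closed under composition for every ternary algebra, with no associativity hypothesis. The third slot is what makes it work: it lets you park $\Phi_1$ while $\Phi_2$ moves. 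In a binary algebra the same shuffle only turns $\Phi_1\Phi_2$ into $\Phi_2\Phi_1$.

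\textbf{The gap in (iii).} You leave (iii) open, and it cannot be proved. In the symmetric sense you adopt, which is also the paper's (its proof appeals to ``symmetry of the product''), the statement is false. The paper's argument simply asserts $\Phi(\llbracket x,y,z\rrbracket)=\llbracket\Phi(x),y,z\rrbracket$ for $\Phi\in QCent(\mathcal{L})$, and this does not follow from the definition.

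Your proposed test would not reveal the failure. Take the algebra whose only nonzero product is $\llbracket e_1,e_1,e_1\rrbracket=e_2$. It is fully symmetric, and the Leibniz identity holds because every bracket with an entry in $\mathbb{K}e_2$ vanishes. The map $e_1\mapsto e_2$, $e_2\mapsto 0$ kills $\mathcal{L}^3=\mathbb{K}e_2$, so it actually lies in $Cent(\mathcal{L})$.

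Take instead $\Phi(e_1)=0$, $\Phi(e_2)=e_2$. Its image lies in the annihilator, so $\llbracket\Phi(x),y,z\rrbracket$, $\llbracket x,\Phi(y),z\rrbracket$ and $\llbracket x,y,\Phi(z)\rrbracket$ all vanish, and $\Phi\in QCent(\mathcal{L})$. Yet $\Phi(\llbracket e_1,e_1,e_1\rrbracket)=e_2\neq 0=\llbracket\Phi(e_1),e_1,e_1\rrbracket$. In this example $\dim QCent(\mathcal{L})=3$ while $\dim Cent(\mathcal{L})=2$.

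More generally, every $\Phi$ with image in the annihilator $Z(\mathcal{L})$ lies in $QCent(\mathcal{L})$, and it lies in $Cent(\mathcal{L})$ if and only if $\Phi(\mathcal{L}^3)=0$. So for commutative $\mathcal{L}$ the claimed equality fails whenever $Z(\mathcal{L})$ and $\mathcal{L}^3$ are both nonzero.

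Your diagnosis of the Leibniz route is accurate. The expansion gives $\llbracket D(x,y,z),t,u\rrbracket=-\llbracket x,y,D(z,t,u)\rrbracket-\llbracket x,D(y,t,u),z\rrbracket$. Symmetrizing this over the five arguments does force $D$ into $Z(\mathcal{L})$, but nothing forces $D=0$. So (iii) should be refuted by an example like the one above, or weakened (for instance to algebras with $Z(\mathcal{L})=0$), rather than proved.
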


\begin{proof}
(i) follows immediately from linearity of the ternary product.

(ii) Let $\Phi \in Cent(\mathcal{L})$.
By definition,
$$
\Phi(\llbracket x,y,z\rrbracket)=\llbracket \Phi(x),y,z\rrbracket
=\llbracket x,\Phi(y),z\rrbracket=\llbracket x,y,\Phi(z)\rrbracket,
$$
which implies $\Phi \in QCent(\mathcal{L})$.

(iii) Assume that $\mathcal{L}$ is commutative and let $\Phi \in QCent(\mathcal{L})$.
Then
$$
\Phi(\llbracket x,y,z\rrbracket)=\llbracket \Phi(x),y,z\rrbracket,
$$
and by symmetry of the product, the centroid identities follow.
Hence $\Phi \in Cent(\mathcal{L})$, and the result follows from (ii).

(iv) Let $\Phi_1,\Phi_2 \in QCent(\mathcal{L})$.
Using associativity of the ternary product, we obtain
$$
\llbracket (\Phi_1\circ\Phi_2)(x),y,z\rrbracket=\llbracket x,(\Phi_1\circ\Phi_2)(y),z\rrbracket
=\llbracket x,y,(\Phi_1\circ\Phi_2)(z)\rrbracket,
$$
which shows that $\Phi_1\circ\Phi_2 \in QCent(\mathcal{L})$.
\end{proof}

\begin{defn}
Let $(\mathcal{L},\llbracket -,-,- \rrbracket)$ be a ternary Leibniz algebra over a field $\mathbb{K}$.
A linear map $D \in End_{\mathbb{K}}(\mathcal{L})$ is called a derivation of $\mathcal{L}$ if it satisfies the Leibniz rule
$$
D(\llbracket x,y,z \rrbracket)=\llbracket D(x),y,z \rrbracket
+\llbracket x,D(y),z \rrbracket+\llbracket x,y,D(z) \rrbracket
$$
for all $x,y,z \in \mathcal{L}$.
The set of all derivations of $\mathcal{L}$ is denoted by $Der(\mathcal{L})$.
\end{defn}

\begin{defn}
Let $(\mathcal{L},\llbracket-,-,-\rrbracket)$ be a ternary Leibniz algebra.
A linear map $\psi \in End_{\mathbb{K}}(\mathcal{L})$ is called a
central derivation if
$$
\psi(\llbracket x,y,z\rrbracket)=\llbracket \psi(x),y,z\rrbracket
=\llbracket x,\psi(y),z\rrbracket=\llbracket x,y,\psi(z)\rrbracket
=0,
\quad \forall x,y,z \in \mathcal{L}.
$$
The set of all central derivations of $\mathcal{L}$ is denoted by $C(\mathcal{L})$.
\end{defn}

\begin{prop}
Let $(\mathcal{L},\llbracket-,-,-\rrbracket)$ be a ternary Leibniz algebra over a field $\mathbb{K}$.
Then
$$
C(\mathcal{L})=\Gamma(\mathcal{L})\cap \mathrm{Der}(\mathcal{L}).
$$
\end{prop}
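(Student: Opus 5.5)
We prove the equality $C(\mathcal{L})=\Gamma(\mathcal{L})\cap \mathrm{Der}(\mathcal{L})$ by double inclusion, working directly from the definitions of the centroid, of derivations and of central derivations. For each inclusion we fix arbitrary $x,y,z\in\mathcal{L}$ and compare the defining identities pointwise.

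\emph{First inclusion, $C(\mathcal{L})\subseteq \Gamma(\mathcal{L})\cap\mathrm{Der}(\mathcal{L})$.} Let $\psi\in C(\mathcal{L})$. By definition, the four quantities $\psi(\llbracket x,y,z\rrbracket)$, $\llbracket \psi(x),y,z\rrbracket$, $\llbracket x,\psi(y),z\rrbracket$ and $\llbracket x,y,\psi(z)\rrbracket$ are all equal (to $0$). Their equality is exactly the centroid condition, so $\psi\in\Gamma(\mathcal{L})$. Moreover, the derivation rule reads $0=0+0+0$, so $\psi\in\mathrm{Der}(\mathcal{L})$.

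\emph{Second inclusion, $\Gamma(\mathcal{L})\cap\mathrm{Der}(\mathcal{L})\subseteq C(\mathcal{L})$.} Let $\Phi\in\Gamma(\mathcal{L})\cap\mathrm{Der}(\mathcal{L})$, and write
$$w:=\Phi(\llbracket x,y,z\rrbracket)=\llbracket \Phi(x),y,z\rrbracket=\llbracket x,\Phi(y),z\rrbracket=\llbracket x,y,\Phi(z)\rrbracket,$$
using the centroid property. Substituting these equalities into the derivation rule gives $w=w+w+w=3w$, hence $2w=0$. Since the paper works over $\mathbb{C}$ (more generally, characteristic zero, or any characteristic $\neq 2$), we obtain $w=0$. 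Therefore all four expressions vanish, and $\Phi\in C(\mathcal{L})$.

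The only point that needs care is this division by $2$. It requires that the characteristic of $\mathbb{K}$ is not $2$, and this hypothesis is implicit from the paper's standing assumption that we work over the complex field. The statement is false without it: in characteristic $2$ the identity map is in both the centroid and $\mathrm{Der}(\mathcal{L})$ but is not central unless $\mathcal{L}$ is abelian. We will state this hypothesis explicitly at the point of use. Everything else is a direct unwinding of the definitions.
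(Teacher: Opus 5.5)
Your proposal is correct and follows the same double-inclusion argument as the paper: unwind the definitions for $C(\mathcal{L})\subseteq\Gamma(\mathcal{L})\cap\mathrm{Der}(\mathcal{L})$, and compare the derivation rule with the centroid identities for the reverse inclusion. Your explicit step $w=3w\Rightarrow 2w=0$, together with the characteristic $\neq 2$ caveat and the identity-map counterexample in characteristic $2$, makes precise what the paper's ``comparing \ldots\ it follows that'' leaves implicit.
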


\begin{proof}
Let $\psi\in \Gamma(\mathcal{L})\cap \mathrm{Der}(\mathcal{L})$.
Since $\psi$ is a derivation, for all $x,y,z\in\mathcal{L}$ we have
\begin{equation}\label{eq1}
\psi(\llbracket x,y,z\rrbracket)
=
\llbracket\psi(x),y,z\rrbracket+\llbracket x,\psi(y),z\rrbracket+\llbracket x,y,\psi(z)\rrbracket.
\end{equation}
On the other hand, since $\psi$ belongs to the centroid, we also have
\begin{equation}\label{eq2}
\psi(\llbracket x,y,z\rrbracket)=\llbracket\psi(x),y,z\rrbracket=\llbracket x,\psi(y),z\rrbracket=\llbracket x,y,\psi(z)\rrbracket.
\end{equation}
Comparing \eqref{eq1} and \eqref{eq2}, it follows that
$$
\llbracket\psi(x),y,z\rrbracket=\llbracket x,\psi(y),z\rrbracket=\llbracket x,y,\psi(z)\rrbracket=0,
\quad \forall x,y,z\in\mathcal{L}.
$$
Hence $\psi(\llbracket x,y,z\rrbracket)=0$ for all $x,y,z\in\mathcal{L}$, which proves that
$\psi$ is a central derivation. Therefore,
$$
\Gamma(\mathcal{L})\cap \mathrm{Der}(\mathcal{L}) \subseteq C(\mathcal{L}).
$$
Conversely, let $\psi\in C(\mathcal{L})$. Then $\psi$ is a derivation satisfying
$\psi(\llbracket x,y,z\rrbracket)=0$ for all $x,y,z\in\mathcal{L}$. Consequently,
$$
\llbracket\psi(x),y,z\rrbracket=\llbracket x,\psi(y),z\rrbracket=\llbracket x,y,\psi(z)\rrbracket=0,
$$
which implies that $\psi$ commutes with the ternary product in each argument.
Thus $\psi\in\Gamma(\mathcal{L})$, and hence
$$
C(\mathcal{L}) \subseteq \Gamma(\mathcal{L})\cap \mathrm{Der}(\mathcal{L}).
$$

Combining both inclusions, we conclude that
$$
C(\mathcal{L})=\Gamma(\mathcal{L})\cap \mathrm{Der}(\mathcal{L}).
$$
\end{proof}


\begin{prop}
Let $(\mathcal{L},\llbracket-,-,-\rrbracket)$ be a ternary Leibniz algebra. Then
\begin{enumerate}
    \item[(i)] $\Gamma(\mathcal{L}) \, Der(\mathcal{L}) \subseteq Der(\mathcal{L})$.
    \item[(ii)] $\left[\Gamma(\mathcal{L}), Der(\mathcal{L})\right]\subseteq \Gamma(\mathcal{L})$.
    \item[(iii)] $\left[\Gamma(\mathcal{L}), \Gamma(\mathcal{L})\right](\mathcal{L}) \subseteq \Gamma(\mathcal{L})$ and $\left[\Gamma(\mathcal{L}), \Gamma(\mathcal{L})\right](\mathcal{L}^3) = 0$.
\end{enumerate}
\end{prop}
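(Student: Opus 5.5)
The plan is to verify each item directly from the defining identities of $\Gamma(\mathcal{L})$ and $\mathrm{Der}(\mathcal{L})$ by a slot-by-slot computation. Throughout, fix $\varphi,\varphi_1,\varphi_2\in\Gamma(\mathcal{L})$, $d\in Der(\mathcal{L})$ and $x,y,z\in\mathcal{L}$. The one tool used repeatedly is the centroid identity $\varphi\llbracket a,b,c\rrbracket=\llbracket \varphi a,b,c\rrbracket=\llbracket a,\varphi b,c\rrbracket=\llbracket a,b,\varphi c\rrbracket$. It lets us move $\varphi$ freely into any chosen slot, or pull it out of any slot.

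For (i), I would apply $d$ first and then $\varphi$:
$$\varphi d\llbracket x,y,z\rrbracket=\varphi\llbracket dx,y,z\rrbracket+\varphi\llbracket x,dy,z\rrbracket+\varphi\llbracket x,y,dz\rrbracket.$$
In each summand, $\varphi$ is pushed into the slot already occupied by $d$. This gives $\llbracket \varphi d x,y,z\rrbracket+\llbracket x,\varphi d y,z\rrbracket+\llbracket x,y,\varphi d z\rrbracket$, so $\varphi d\in Der(\mathcal{L})$.

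For (ii), I must show that $[\varphi,d]=\varphi d-d\varphi$ satisfies the three centroid identities; I treat the first slot, and the others are symmetric. On one side,
$$d\varphi\llbracket x,y,z\rrbracket=d\llbracket\varphi x,y,z\rrbracket=\llbracket d\varphi x,y,z\rrbracket+\llbracket\varphi x,dy,z\rrbracket+\llbracket \varphi x,y,dz\rrbracket.$$
On the other side, $\varphi d\llbracket x,y,z\rrbracket$ is computed using the first-slot form of the centroid identity in each term:
$$\varphi d\llbracket x,y,z\rrbracket=\llbracket\varphi dx,y,z\rrbracket+\llbracket\varphi x,dy,z\rrbracket+\llbracket\varphi x,y,dz\rrbracket.$$
Subtracting, the mixed terms cancel and leave $[\varphi,d]\llbracket x,y,z\rrbracket=\llbracket[\varphi,d]x,y,z\rrbracket$. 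Repeating this with $\varphi$ routed into the second and then the third slot gives the remaining two equalities. Hence $[\varphi,d]\in\Gamma(\mathcal{L})$.

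For (iii), I read the first assertion as saying that the commutator $[\varphi_1,\varphi_2]$ lies in $\Gamma(\mathcal{L})$. This follows at once from the earlier Proposition: $Cent(\mathcal{L})$ is closed under composition and linear combinations, so $\varphi_1\varphi_2-\varphi_2\varphi_1\in\Gamma(\mathcal{L})$. For the vanishing on $\mathcal{L}^3$, I compute
$$\varphi_1\varphi_2\llbracket x,y,z\rrbracket=\varphi_1\llbracket x,\varphi_2y,z\rrbracket=\llbracket\varphi_1x,\varphi_2y,z\rrbracket,\qquad \varphi_2\varphi_1\llbracket x,y,z\rrbracket=\varphi_2\llbracket\varphi_1x,y,z\rrbracket=\llbracket\varphi_1x,\varphi_2y,z\rrbracket.$$
These agree, so the commutator kills every $\llbracket x,y,z\rrbracket$, and by linearity it kills $\mathcal{L}^3$. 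The point is that the two maps are placed in different slots.

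The main obstacle is the bookkeeping in (ii). One must choose, for each target slot, how to route $\varphi$ through the three derivation terms so that the cross terms cancel exactly; this needs care but is routine. A secondary issue is the unusual notation $[\Gamma,\Gamma](\mathcal{L})\subseteq\Gamma$ in (iii), which I interpret as membership of the commutator in $\Gamma(\mathcal{L})$.
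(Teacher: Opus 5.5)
Your proof is correct and follows the paper's route. Parts (i) and (ii) are the same slot-by-slot verifications the paper sketches, and you supply the cancellation of mixed terms that it leaves implicit; part (iii) uses the same device of pushing $\varphi_1$ and $\varphi_2$ into different slots. The one difference is how the ill-typed first half of (iii) is read. The paper's proof asserts $\llbracket[\varphi,\psi](x),y,z\rrbracket=\llbracket x,[\varphi,\psi](y),z\rrbracket=\llbracket x,y,[\varphi,\psi](z)\rrbracket=0$, i.e.\ that the image of the commutator lies in the annihilator, which is the classical form of this statement. That reading follows in one line from your two facts: $[\varphi_1,\varphi_2]\in\Gamma(\mathcal{L})$ gives $\llbracket[\varphi_1,\varphi_2]x,y,z\rrbracket=[\varphi_1,\varphi_2]\llbracket x,y,z\rrbracket=0$, and likewise in the other two slots.
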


\begin{proof}
\begin{enumerate}
    \item[(i)] Let $\varphi \in \Gamma(\mathcal{L})$ and $\kappa \in Der(\mathcal{L})$. Then for all $x,y,z \in \mathcal{L}$,
    $$
    (\varphi \circ \kappa)(\llbracket x,y,z\rrbracket) = \varphi(\kappa(\llbracket x,y,z\rrbracket)).
    $$
    Using the derivation property of $\kappa$ and the centroid property of $\varphi$, we obtain
   $$
    (\varphi \circ \kappa)(\llbracket x,y,z\rrbracket) =\llbracket(\varphi \circ \kappa)(x),y,z\rrbracket +\llbracket x,(\varphi \circ \kappa)(y),z\rrbracket +\llbracket x,y,(\varphi \circ \kappa)(z)\rrbracket,
    $$
    showing that $\varphi\circ \kappa \in Der(\mathcal{L})$.

    \item[(ii)] Let $\varphi \in \Gamma(\mathcal{L})$ and $\kappa \in Der(\mathcal{L})$. For all $x,y,z\in \mathcal{L}$,
    \[
    [\varphi,\kappa](\llbracket x,y,z\rrbracket) = (\varphi\circ\kappa - \kappa\circ\varphi)(\llbracket x,y,z\rrbracket).
    \]
    Using the definitions of derivation and centroid, we see that
   $$
  \left[\varphi,\kappa\right](\llbracket x,y,z\rrbracket) =\llbracket\left[\varphi,\kappa\right](x),y,z\rrbracket = \llbracket x,\left[\varphi,\kappa\right](y),z\rrbracket = \llbracket x,y,\left[\varphi,\kappa\right](z)\rrbracket,
   $$
    which shows $\llbracket\varphi,\kappa\rrbracket\in \Gamma(\mathcal{L})$.

    \item[(iii)] Let $\varphi, \psi \in \Gamma(\mathcal{L})$. Then, for all $x,y,z\in \mathcal{L}$,
   $$
    \left[\varphi,\psi\right](\llbracket x,y,z\rrbracket) = (\varphi\circ\psi - \psi\circ\varphi)( \llbracket x,y,z\rrbracket).
   $$
    Using the centroid property of $\varphi$ and $\psi$, one can check that
   $$
    \llbracket\left[\varphi,\psi\right](x),y,z\rrbracket = \llbracket\llbracket x,\left[\varphi,\psi\right](y),z\rrbracket
		=\llbracket x,y,\left[\varphi,\psi\right](z)\rrbracket = 0,
   $$
    which implies $\left[\varphi,\psi\right]$ acts trivially on triple products, but $\left[\varphi,\psi\right](\mathcal{L}) \subseteq \Gamma(\mathcal{L})$.
\end{enumerate}
\end{proof}

\begin{prop}
Let $(\mathcal{L},\llbracket-,-,-\rrbracket)$ be a ternary Leibniz algebra.
For any $\kappa\in \mathrm{Der}(\mathcal{L})$ and any $\varphi\in \Gamma(\mathcal{L})$, the following statements hold:
\begin{enumerate}
\item[(i)] The composition $\kappa\circ\varphi$ belongs to $\Gamma(\mathcal{L})$ if and only if
$\varphi\circ\kappa$ is a central derivation of $\mathcal{L}$.
\item[(ii)] The composition $\kappa\circ\varphi$ is a derivation of $\mathcal{L}$ if and only if
$[\kappa,\varphi]=\kappa\circ\varphi-\varphi\circ\kappa$ is a central derivation of $\mathcal{L}$.
\end{enumerate}
\end{prop}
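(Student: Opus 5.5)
The plan is to reduce both statements to the identity $C(\mathcal{L})=\Gamma(\mathcal{L})\cap \mathrm{Der}(\mathcal{L})$ proved above. We combine it with the two structural facts from the preceding proposition, items (i) and (ii):
\[
\varphi\circ\kappa\in \mathrm{Der}(\mathcal{L}), \qquad [\varphi,\kappa]\in\Gamma(\mathcal{L}),
\]
for every $\varphi\in\Gamma(\mathcal{L})$ and $\kappa\in\mathrm{Der}(\mathcal{L})$. The whole argument then rests on the decomposition $\kappa\circ\varphi=\varphi\circ\kappa-[\varphi,\kappa]$ and the fact that $\Gamma(\mathcal{L})$ and $\mathrm{Der}(\mathcal{L})$ are vector subspaces of $End_{\mathbb{K}}(\mathcal{L})$.

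Since the earlier proof of $[\Gamma(\mathcal{L}),\mathrm{Der}(\mathcal{L})]\subseteq\Gamma(\mathcal{L})$ was only sketched, I would first re-verify the two facts explicitly. For $\varphi\circ\kappa$, apply $\varphi$ to the derivation identity for $\kappa$. Then move $\varphi$ into the slot already carrying $\kappa$, which the centroid property allows in each summand. For $[\varphi,\kappa]$, write
\begin{align*}
\kappa\varphi(\llbracket x,y,z\rrbracket)&=\kappa(\llbracket\varphi x,y,z\rrbracket)=\llbracket\kappa\varphi x,y,z\rrbracket+\llbracket\varphi x,\kappa y,z\rrbracket+\llbracket\varphi x,y,\kappa z\rrbracket,\\
\varphi\kappa(\llbracket x,y,z\rrbracket)&=\llbracket\varphi\kappa x,y,z\rrbracket+\llbracket\varphi x,\kappa y,z\rrbracket+\llbracket\varphi x,y,\kappa z\rrbracket.
\end{align*}
In the second line, $\varphi$ has been shifted into the first slot in the last two terms using the centroid property. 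Subtracting gives $[\varphi,\kappa](\llbracket x,y,z\rrbracket)=\llbracket[\varphi,\kappa]x,y,z\rrbracket$. Placing $\varphi$ in the second or third slot instead yields the remaining two centroid identities. I expect this bookkeeping to be the only real obstacle; the rest is formal.

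For (i): because $[\varphi,\kappa]\in\Gamma(\mathcal{L})$ and $\Gamma(\mathcal{L})$ is a subspace, $\kappa\circ\varphi\in\Gamma(\mathcal{L})$ holds if and only if $\varphi\circ\kappa=\kappa\circ\varphi+[\varphi,\kappa]\in\Gamma(\mathcal{L})$. Since $\varphi\circ\kappa$ is always a derivation, this is equivalent to $\varphi\circ\kappa\in\Gamma(\mathcal{L})\cap\mathrm{Der}(\mathcal{L})=C(\mathcal{L})$.

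For (ii): because $\varphi\circ\kappa\in\mathrm{Der}(\mathcal{L})$, $\kappa\circ\varphi$ is a derivation if and only if $[\kappa,\varphi]=\kappa\circ\varphi-\varphi\circ\kappa$ is a derivation. Moreover $[\kappa,\varphi]=-[\varphi,\kappa]$ always lies in $\Gamma(\mathcal{L})$. Hence this is equivalent to $[\kappa,\varphi]\in\Gamma(\mathcal{L})\cap\mathrm{Der}(\mathcal{L})=C(\mathcal{L})$, which concludes the proof.
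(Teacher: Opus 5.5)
Your proof is correct, and it rests on the same three facts the paper uses: $\varphi\circ\kappa\in\mathrm{Der}(\mathcal{L})$, $[\varphi,\kappa]\in\Gamma(\mathcal{L})$ (obtained by the same subtraction of the expansions of $\kappa\varphi(\llbracket x,y,z\rrbracket)$ and $\varphi\kappa(\llbracket x,y,z\rrbracket)$), and centrality of elements of $\Gamma(\mathcal{L})\cap\mathrm{Der}(\mathcal{L})$. For (ii) the paper's argument is essentially yours; the only difference is that it re-derives $\Gamma(\mathcal{L})\cap\mathrm{Der}(\mathcal{L})\subseteq C(\mathcal{L})$ inline instead of citing it.

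The real difference is in (i). After the subtraction, the paper asserts that $\kappa\circ\varphi\in\Gamma(\mathcal{L})$ holds if and only if $\varphi\circ\kappa$ annihilates every triple product, and that this is equivalent to $\varphi\circ\kappa$ being central. That intermediate equivalence fails in general. Take $\mathrm{span}\{e_1,e_2,e_3\}$ with only nonzero products $\llbracket e_2,e_1,e_1\rrbracket=e_3=-\llbracket e_1,e_2,e_1\rrbracket$; this is ternary Leibniz because all products lie in $\mathbb{K}e_3$, which annihilates everything. The map $\kappa(e_1)=e_2$, $\kappa(e_2)=\kappa(e_3)=0$ is a derivation that kills all triple products. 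Yet $\llbracket\kappa(e_1),e_1,e_1\rrbracket=e_3\neq0$, so with $\varphi=\mathrm{id}$ the map $\varphi\circ\kappa$ vanishes on all triple products while $\kappa\circ\varphi\notin\Gamma(\mathcal{L})$.

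Your chain $\kappa\circ\varphi\in\Gamma(\mathcal{L})\iff\varphi\circ\kappa\in\Gamma(\mathcal{L})\iff\varphi\circ\kappa\in\Gamma(\mathcal{L})\cap\mathrm{Der}(\mathcal{L})=C(\mathcal{L})$ is exactly the justification that step needs. Working at the level of subspaces also makes (i) and (ii) two instances of one argument.

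One caveat applies to both proofs. The inclusion $\Gamma(\mathcal{L})\cap\mathrm{Der}(\mathcal{L})\subseteq C(\mathcal{L})$ comes from $\psi(\llbracket x,y,z\rrbracket)=3\psi(\llbracket x,y,z\rrbracket)$, so it needs $\mathrm{char}\,\mathbb{K}\neq2$. In characteristic $2$, $\kappa=\varphi=\mathrm{id}$ violates (i) for any non-abelian $\mathcal{L}$. This is harmless in the paper's characteristic-zero setting.
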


\begin{proof}
\begin{enumerate}
\item[(i)]
Let $\kappa\in \mathrm{Der}(\mathcal{L})$ and $\varphi\in \Gamma(\mathcal{L})$.
For all $x,y,z\in\mathcal{L}$, since $\varphi$ belongs to the centroid, we have
$$
\varphi(\llbracket x,y,z\rrbracket)=\llbracket\varphi(x),y,z\rrbracket.
$$
Applying $\kappa$ and using the derivation property of $\kappa$, we obtain
$$
\kappa\circ\varphi(\llbracket x,y,z\rrbracket)=\llbracket\kappa\circ\varphi(x),y,z\rrbracket+\llbracket\varphi(x),\kappa(y),z\rrbracket+\llbracket\varphi(x),y,\kappa(z)\rrbracket.
$$
On the other hand,
$$
\varphi\circ\kappa(\llbracket x,y,z\rrbracket)=\llbracket \varphi\circ\kappa(x),y,z\rrbracket
+
\llbracket\varphi(x),\kappa(y),z\rrbracket
+
\llbracket \varphi(x),y,\kappa(z)].
$$
Subtracting these two equalities yields
$$
(\kappa\circ\varphi-\varphi\circ\kappa)(\llbracket x,y,z\rrbracket)
=
\llbracket(\kappa\circ\varphi-\varphi\circ\kappa)(x),y,z\rrbracket.
$$
Hence $\kappa\circ\varphi\in\Gamma(\mathcal{L})$ if and only if
$$
(\varphi\circ\kappa)(\llbracket x,y,z\rrbracket)=0,
\quad \forall x,y,z\in\mathcal{L},
$$
which is equivalent to $\varphi\circ\kappa$ being a central derivation.

\item[(ii)]
Assume first that $\kappa\circ\varphi$ is a derivation of $\mathcal{L}$.
Since $\varphi\in\Gamma(\mathcal{L})$, it follows that $\left[\kappa,\varphi\right]\in\Gamma(\mathcal{L})$.
Thus, for all $x,y,z\in\mathcal{L}$,
\begin{equation}\label{eq1}
\left[\kappa,\varphi\right]([x,y,z\rrbracket)
=
\llbracket\left[\kappa,\varphi\right](x),y,z\rrbracket
=
\llbracket x,[\kappa,\varphi](y),z\rrbracket
=
\llbracket x,y,\left[\kappa,\varphi\right](z)].
\end{equation}
Moreover, since both $\kappa\circ\varphi$ and $\varphi\circ\kappa$ are derivations, we have
\begin{equation}\label{eq2}
\begin{aligned}
\left[\kappa,\varphi\right](\llbracket x,y,z\rrbracket)
&=
\llbracket\kappa\circ\varphi(x),y,z\rrbracket
+
\llbracket x,\kappa\circ\varphi(y),z\rrbracket
+
\llbracket x,y,\kappa\circ\varphi(z)\rrbracket \\
&\quad -
\llbracket\varphi\circ\kappa(x),y,z\rrbracket
-
\llbracket x,\varphi\circ\kappa(y),z\rrbracket
-
\llbracket x,y,\varphi\circ\kappa(z)\rrbracket.
\end{aligned}
\end{equation}
Comparing \eqref{eq1} and \eqref{eq2}, we obtain
$$
\llbracket\left[\kappa,\varphi\right](x),y,z\llbracket=\llbracket x,\llbracket\kappa,\varphi\rrbracket(y),z\rrbracket=\llbracket x,y,\llbracket \kappa,\varphi\rrbracket(z)\rrbracket=0,
$$
which shows that $\left[\kappa,\varphi\right]$ is a central derivation.

Conversely, assume that $[\kappa,\varphi]$ is a central derivation.
Then for all $x,y,z\in\mathcal{L}$,
$$
\kappa\circ\varphi(\llbracket x,y,z\rrbracket)=\llbracket\varphi\circ\kappa(x),y,z\rrbracket
+
\llbracket x,\varphi\circ\kappa(y),z\rrbracket
+
\llbracket x,y,\varphi\circ\kappa(z)\rrbracket,
$$
which proves that $\kappa\circ\varphi$ satisfies the derivation identity.
\end{enumerate}
\end{proof}


\begin{prop}\label{pQ}
Let $(\mathcal{L},\llbracket-,-,-\rrbracket)$   be a  finite dimensional ternary Leibniz algebra with basis $\left\{1,2,\cdots,n\right\}$. A linear transformation $\Phi : \mathcal{L}\rightarrow \mathcal{L}$
belongs to $QCent(\mathcal{L})$ if and only if 
$$\llbracket\Phi(e_i), e_j, e_k\rrbracket=\llbracket e_i, \Phi(e_j), e_k\rrbracket=\llbracket e_i, e_j, \Phi(e_k)\rrbracket$$ for all $i, j\in \left\{1,2,\cdots,n\right\}.$
\end{prop}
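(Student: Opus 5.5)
The plan is a two-direction argument resting only on trilinearity of $\llbracket-,-,-\rrbracket$ and linearity of $\Phi$. Throughout I read the basis as $\{e_1,\dots,e_n\}$ and the quantifier as ranging over all $i,j,k\in\{1,\dots,n\}$, which is clearly what is intended.

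For the forward direction, suppose $\Phi\in QCent(\mathcal{L})$. Definition~\ref{qc} requires the identities
\[
\llbracket \Phi(x),y,z\rrbracket=\llbracket x,\Phi(y),z\rrbracket=\llbracket x,y,\Phi(z)\rrbracket
\]
for all $x,y,z\in\mathcal{L}$. Specializing to $x=e_i$, $y=e_j$, $z=e_k$ gives the stated conditions immediately.

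For the converse, assume the basis identities hold for every triple $(i,j,k)$. Take arbitrary $x=\sum_i a_ie_i$, $y=\sum_j b_je_j$, $z=\sum_k c_ke_k$. Since $\Phi$ is linear, $\Phi(x)=\sum_i a_i\Phi(e_i)$, and trilinearity of the bracket gives
\[
\llbracket \Phi(x),y,z\rrbracket=\sum_{i,j,k}a_ib_jc_k\,\llbracket \Phi(e_i),e_j,e_k\rrbracket .
\]
Expanding $\llbracket x,\Phi(y),z\rrbracket$ and $\llbracket x,y,\Phi(z)\rrbracket$ the same way gives
\[
\sum_{i,j,k}a_ib_jc_k\,\llbracket e_i,\Phi(e_j),e_k\rrbracket
\quad\text{and}\quad
\sum_{i,j,k}a_ib_jc_k\,\llbracket e_i,e_j,\Phi(e_k)\rrbracket .
\]
The hypothesis says these three sums agree term by term, so they are equal. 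Therefore $\Phi\in QCent(\mathcal{L})$.

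There is no real obstacle in this argument. The only point needing care is to use the same coefficient expansion in all three slots, so that the comparison is genuinely termwise. In practice, this criterion turns membership in $QCent(\mathcal{L})$ into the finite linear system on the matrix entries of $\Phi$ that will be solved for each algebra $\mathcal{L}_1,\dots,\mathcal{L}_{18}^\alpha$ in the classification.
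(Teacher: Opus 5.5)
Your proof is correct and takes the same approach as the paper. Necessity comes from specializing the defining identities of Definition~\ref{qc} to basis vectors, and sufficiency from expanding $x,y,z$ in the basis and using linearity of $\Phi$ and trilinearity of the bracket to compare the three sums termwise; your uniform expansion over all $i,j,k$ also avoids the index slips ($m$ versus $n$, $y_k$ versus $z_k$) in the paper's written computation.
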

\begin {proof}
Necessity follows directly from Definition \ref{qc}. For sufficiency, consider arbitrary elements\\ 
$$x=\displaystyle\sum^{n}_{i=1}x_ie_i\in \mathcal{L},\quad y=\sum^{m}_{j=1}y_je_j\in \mathcal{L},\,\text{and}\, z=\sum^{m}_{k=1}z_ke_k\in \mathcal{L}.$$
Assuming the condition holds on basis elements, compute the following: 
$$\begin{array}{ll}
\displaystyle\llbracket\Phi(x), y, z\rrbracket
&=\displaystyle\llbracket\Phi(\sum^{n}_{i=1}x_ie_i),\sum^{m}_{j=1}y_je_j, \sum^{m}_{k=1}z_ke_k\rrbracket\\
&=\displaystyle\llbracket\sum^{n}_{i=1}x_i\Phi(e_i),\sum^{m}_{j=1}y_je_j, \sum^{m}_{k=1}y_ke_k\rrbracket\\
&=\displaystyle\sum^{n}_{i=1}\sum^{n}_{j=1}x_iy_jz_k(\llbracket\Phi(e_i),e_j,e_k\rrbracket)\\
&=\displaystyle\sum^{n}_{i=1}\sum^{n}_{j=1}\sum^{n}_{k=1}x_iy_jz_k(\llbracket e_i, \Phi(e_j), e_k\rrbracket)\\
&=\displaystyle\llbracket\sum^{n}_{i=1}x_ie_i,\sum^{m}_{j=1}y_j\Phi(e_j), \sum_{k=1}^nz_ke_k\rrbracket\\
&=\displaystyle\llbracket x,\Phi(\sum^{m}_{j=1}y_je_j),z\rrbracket=\llbracket x, \Phi(y), z\rrbracket.
\end{array}$$

$$\begin{array}{ll}
\displaystyle\llbracket\Phi(x), y, z\rrbracket
&=\displaystyle\llbracket\Phi(\sum^{n}_{i=1}x_ie_i),\sum^{m}_{j=1}y_je_j, \sum^{m}_{k=1}z_ke_k\rrbracket\\
&=\displaystyle\llbracket\sum^{n}_{i=1}x_i\Phi(e_i),\sum^{m}_{j=1}y_je_j, \sum^{m}_{k=1}y_ke_k\rrbracket\\
&=\displaystyle\sum^{n}_{i=1}\sum^{n}_{j=1}x_iy_jz_k(\llbracket\Phi(e_i),e_j,e_k\rrbracket)\\
&=\displaystyle\sum^{n}_{i=1}\sum^{n}_{j=1}\sum^{n}_{k=1}x_iy_jz_k(\llbracket e_i, e_j,\Phi(e_k)\rrbracket)\\
&=\displaystyle\llbracket\sum^{n}_{i=1}x_ie_i, \sum_{j=1}^nz_je_j,\sum^{m}_{k=1}z_k\Phi(e_k)\rrbracket\\
&=\displaystyle\llbracket x,\Phi(\sum^{m}_{j=1}y_je_j),z\rrbracket=\llbracket x,y, \Phi(z)\rrbracket.
\end{array}$$

$$\begin{array}{ll}
\displaystyle\llbracket x, \Phi(y), z\rrbracket
&=\displaystyle\llbracket\sum^{m}_{i=1}x_ie_i, \Phi(\sum^{n}_{j=1}y_je_j), \sum^{m}_{k=1}z_ke_k\rrbracket\\
&=\displaystyle\llbracket ,\sum^{m}_{i=1}x_ie_i, \sum^{n}_{j=1}x_j\Phi(e_j), \sum^{m}_{k=1}y_ke_k\rrbracket\\
&=\displaystyle\sum^{n}_{i=1}\sum^{n}_{j=1}\sum^{n}_{k=1}x_iy_jz_k(\llbracket e_i,\Phi(e_j), e_k\rrbracket)\\
&=\displaystyle\sum^{n}_{i=1}\sum^{n}_{j=1}\sum^{n}_{k=1}x_iy_jz_k(\llbracket e_i, e_j,\Phi(e_k)\rrbracket)\\
&=\displaystyle\llbracket\sum^{n}_{i=1}x_ie_i, \sum_{j=1}^ny_je_j,\sum^{m}_{k=1}z_k\Phi(e_k)\rrbracket\\
&=\displaystyle\llbracket x,\Phi(\sum^{m}_{j=1}y_je_j),z\rrbracket=\llbracket x,y, \Phi(z)\rrbracket.
\end{array}$$

This establishes the sufficiency condition.
\end {proof}
For computation purposes, we fix a basis $\left\{e_1,e_2,\cdots,e_n\right\}$ of $L$ and represent linear transformations using matrices.  If $\Phi: \mathcal{L}\rightarrow \mathcal{L}$ has a matrix 
representation $\Phi=(a_{ij})$ relative to this basis, that is, $$\Phi(e_i)=\sum_{j=1}^na_{ji}e_{j},\quad \text{for}\, i=1,\ldots,n.$$

Let $(\mathcal{L}, \llbracket-,-,-\rrbracket)$ be an $n$-dimensional Ternary Leibniz algebra, $\{e_i\}$ be a basis of $\mathcal{L}$ and let $\Phi$ be a quasi-centroid on $\mathcal{L}$.
 For any $i, j\in \mathbb{N}, 1\leq i, j\leq n$, let us put 
$$\llbracket e_i, e_j, e_k\rrbracket=\sum_{p=1}^{n}\chi_{ijk}^pe_p, \quad\Phi(e_i)=\sum_{j=1}^na_{ji}e_j.$$
The axioms in Definition \ref{dt} are, respectively, equivalent to
$$
a_{pi}\chi_{pjk}^q=a_{pj}\chi_{ipk}^q=a_{pk}\chi_{ijp}^q.
$$

\begin{thm}
The matrix representation of the quasi-centroid of the four-dimensional non-abelian ternary Leibniz algebra
$\mathcal{L}_4^{1}$ with respect to the basis $\{e_1,e_2,e_3,e_4\}$ is given by
$$
QCent(\mathcal{L}_4^1)
=
\left\{\left(\begin{array}{ccccc}
a_{11}&0&0&0\\
a_{21}&a_{22}&a_{23}&0\\
0&0&a_{11}&a_{24}\\
a_{41}&a_{42}&a_{43}&a_{44}
\end{array}
\right), a_{ij}\in\mathbb{K}\right\}.
$$
Moreover, a basis of $QCent(\mathcal{L}_4^1)$ is given by
$$
\{e_{11},e_{21},e_{41},e_{22},e_{42},e_{23},e_{43},e_{24},e_{44}\}.
$$
\end{thm}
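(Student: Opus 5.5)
The plan is to reduce membership in $QCent(\mathcal{L}_4^1)$ to a finite linear system using Proposition~\ref{pQ}, and then solve it. Here $\mathcal{L}_4^1$ is read as the algebra $\mathcal{L}_4$ of the classification theorem. By Proposition~\ref{pQ}, a linear map $\Phi$ with $\Phi(e_i)=\sum_{j}a_{ji}e_j$ lies in $QCent(\mathcal{L}_4^1)$ if and only if
$$\llbracket\Phi(e_i),e_j,e_k\rrbracket=\llbracket e_i,\Phi(e_j),e_k\rrbracket=\llbracket e_i,e_j,\Phi(e_k)\rrbracket$$
for all $1\le i,j,k\le 4$. In coefficient form these are the relations $a_{pi}\chi_{pjk}^q=a_{pj}\chi_{ipk}^q=a_{pk}\chi_{ijp}^q$ (summed over $p$) stated just before the theorem. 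So the whole proof is a bookkeeping computation of $64$ triples, each giving two vector equalities in $\mathcal{L}$, i.e. at most $8$ scalar equations in the $16$ unknowns $a_{ij}$.

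First I will record the structure of the bracket of $\mathcal{L}_4$, since this cuts the work drastically.
\begin{itemize}
\item $e_3$ is annihilated in every slot.
\item $e_1$ can only occur in the first slot, and only in $\llbracket e_1,e_2,e_2\rrbracket=\llbracket e_1,e_4,e_4\rrbracket=e_1+e_3$.
\item The remaining nonzero brackets are the eight products of $e_2,e_4$ whose value is $e_3$. Note that $\llbracket e_2,e_2,e_4\rrbracket$ and $\llbracket e_2,e_4,e_2\rrbracket$ both equal $e_3$, but so does every word in $\{e_2,e_4\}^3$ listed.
\end{itemize}
Consequently, for any $v=\sum v_pe_p$:
\begin{itemize}
\item $\llbracket v,e_j,e_k\rrbracket$ only sees $v_1,v_2,v_4$;
\item $\llbracket e_i,v,e_k\rrbracket$ and $\llbracket e_i,e_j,v\rrbracket$ only see $v_2,v_4$, with $v_1$ and $v_3$ dropping out.
\end{itemize}
I will write these three partial bracket maps explicitly as small matrices before substituting $\Phi(e_i)$.

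Next I will organize the triples by which slots contain $e_1$ or $e_3$.
\begin{itemize}
\item \emph{Triples with $e_3$ in any slot.} The corresponding side vanishes, forcing the other two sides to vanish. This yields linear conditions killing the $e_2,e_4$ components (and, for the first slot, also the $e_1$ component) of $\Phi(e_3)$, together with constraints on the images of the other basis vectors.
\item \emph{Triples with $e_1$ in the second or third slot.} These similarly force the $e_2,e_4$ components of $\Phi(e_1)$ to be constrained.
\item \emph{Triples $(e_1,e_j,e_k)$ with $j,k\in\{2,4\}$.} The $e_1$ coefficient of the output compares $a_{11}$ with diagonal-type entries, and the $e_3$ coefficient ties $a_{33}$ to $a_{11}$. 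This is the source of the repeated $a_{11}$ on the diagonal of the stated matrix.
\item \emph{Triples within $\{e_2,e_4\}^3$.} These are handled last and give symmetric conditions among $a_{22},a_{24},a_{42},a_{44}$.
\end{itemize}

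Finally, I will collect all surviving free parameters. The goal is to check that they match the displayed matrix, which has $9$ free entries counted with the coincidence in the $(1,1)$ and $(3,3)$ positions. I will then read off the basis $\{e_{11},\dots,e_{44}\}$, where $e_{ij}$ denotes the matrix unit (with the $e_{11}$ direction understood to carry the combined diagonal entry), and verify the converse by substituting the general matrix back into the $64$ conditions.

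The main obstacle I expect is not conceptual but the reliable elimination in the $\{e_2,e_4\}^3$ block combined with the $e_1$-first-slot triples. In particular, I must determine exactly which off-diagonal entries such as $a_{23},a_{43},a_{24}$ survive. The stated matrix's placement of entries (for instance $a_{24}$ appearing in row $3$, column $4$) must be reconciled with the convention $\Phi(e_i)=\sum_j a_{ji}e_j$. I would first do this block symbolically by hand, exploiting that every word in $\{e_2,e_4\}^3$ listed gives $e_3$. Then I would cross-check with a computer algebra solve of the full system, as the paper does for the classification.
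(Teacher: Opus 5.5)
Your plan fails at the first line because you have chosen the wrong algebra. In the paper, $\mathcal{L}_4^k$ denotes the $k$-th algebra of the four-dimensional classification; the table runs over $\mathcal{L}_4^1,\dots,\mathcal{L}_4^{18}$, with the subscript giving the dimension. So $\mathcal{L}_4^1=\mathcal{L}_1$, whose only nonzero products are $\llbracket e_1,e_1,e_1\rrbracket=\llbracket e_1,e_1,e_3\rrbracket=\llbracket e_1,e_3,e_1\rrbracket=\llbracket e_3,e_1,e_1\rrbracket=\llbracket e_3,e_1,e_3\rrbracket=\llbracket e_3,e_3,e_3\rrbracket=e_2+e_4$. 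The paper's proof begins by recalling exactly these.

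Carried out for $\mathcal{L}_4$, your computation cannot reach the claimed matrix:
\begin{itemize}
\item In $\mathcal{L}_4$, $e_3$ is annihilated in every slot, so $a_{31},a_{32},a_{33},a_{34}$ never enter any condition and stay free. The mechanism you expected, ``the $e_3$-coefficient ties $a_{33}$ to $a_{11}$'', does not occur.
\item The triples with $e_1$ in the first slot force $a_{22}=a_{44}=a_{11}$ and $a_{21}=a_{41}=a_{23}=a_{43}=a_{24}=a_{42}=0$.
\item The $e_1$-coefficients of triples with $e_2,e_3,e_4$ in the first slot force $a_{12}=a_{13}=a_{14}=0$.
\end{itemize}
The result is a $5$-dimensional space with row $3$ free and rows $2,4$ rigid. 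This is the table's entry for $\mathcal{L}_4^4$, and it has the opposite shape from the statement.

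With the right algebra, your basis-reduction method (which is also the paper's) goes through, but the bookkeeping differs. Every bracket has the form $T(x,y,z)(e_2+e_4)$, where $T$ depends only on the $e_1,e_3$-coordinates, and $e_2,e_4$ are annihilated in every slot. The steps are then:
\begin{itemize}
\item Rows $2$ and $4$ of $\Phi$ never enter any condition and are free, giving $8$ parameters.
\item Triples with $e_2$ or $e_4$ in one slot force $a_{12}=a_{32}=a_{14}=a_{34}=0$.
\item On $\mathrm{span}(e_1,e_3)$, the triple $(e_1,e_1,e_3)$ gives $a_{31}=0$ and $a_{13}+a_{33}=a_{11}$, and $(e_3,e_3,e_3)$ gives $a_{13}=0$.
\end{itemize}
The $e_1,e_3$-part of $\Phi$ is then $a_{11}$ times the projection, so sufficiency is immediate and the dimension is $9$.

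This also settles the placement issue you flagged. The entry $a_{34}=0$ is forced: compare $\llbracket e_3,e_3,\Phi(e_4)\rrbracket=a_{34}(e_2+e_4)$ with $\llbracket \Phi(e_3),e_3,e_4\rrbracket=0$. This agrees with the paper's own derivation $a_{14}=a_{34}=0$. So the displayed $a_{24}$ in position $(3,4)$ is a misprint for position $(2,4)$, and the basis vector written $e_{11}$ must be read as $e_{11}+e_{33}$.
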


\begin{proof}
By Proposition~\ref{pQ}, a linear map $\Phi\in End_{\mathbb{K}}(\mathcal{L}_4^1)$ belongs to
$QCent(\mathcal{L}_4^1)$ if and only if
$$\llbracket \Phi(x),y,z\rrbracket=\llbracket x,\Phi(y),z\rrbracket=\llbracket x,y,\Phi(z)\rrbracket
\quad\text{for all } x,y,z\in\mathcal{L}_4^1.
$$
Let
$$\Phi=
\left(\begin{array}{ccccc}
a_{11}&a_{12}&a_{13}&a_{14}\\
a_{21}&a_{22}&a_{23}&a_{24}\\
a_{31}&a_{32}&a_{33}&a_{34}\\
a_{41}&a_{42}&a_{43}&a_{44}
\end{array}
\right),$$
be the matrix of $\Phi$ relative to the basis $\{e_1,e_2,e_3,e_4\}$.
Recall that the nonzero ternary products of $\mathcal{L}_4^1$ are
$$\llbracket e_1,e_1,e_1\rrbracket=\llbracket e_1,e_1,e_3\rrbracket=\llbracket e_1,e_3,e_1\rrbracket
=\llbracket e_3,e_1,e_1\rrbracket=\llbracket e_3,e_1,e_3\rrbracket=\llbracket e_3,e_3,e_3\rrbracket=e_2+e_4.
$$
We now impose the quasi-centroid conditions.

\begin{itemize}
\item \textbf{From the identities}
$$
\llbracket\Phi(e_1),e_1,e_1\rrbracket=\llbracket e_1,\Phi(e_1),e_1\rrbracket=\llbracket e_1,e_1,\Phi(e_1)\rrbracket,
$$
we obtain
$$
a_{31}=0.
$$
\item\textbf{From}
$$
\llbracket e_1,e_1,\Phi(e_2)\rrbracket=\llbracket\Phi(e_1),e_1,e_2\rrbracket=\llbracket e_1,\Phi(e_1),e_2\rrbracket,
$$
we deduce
$$
a_{32}=0.
$$
\item \textbf{Using the relations involving} $\Phi(e_3)$, namely
$$
\llbracket\Phi(e_1),e_1,e_3\rrbracket=\llbracket e_1,e_1,\Phi(e_3)\rrbracket,
$$
we obtain
$$
a_{11}=a_{33}, \qquad a_{13}=0.
$$
\item \textbf{From the conditions involving} $\Phi(e_4)$,
$$
\llbracket e_1,e_1,\Phi(e_4)\rrbracket=\llbracket\Phi(e_1),e_1,e_4\rrbracket,
$$
and
$$
\llbracket e_1,e_3,\Phi(e_4)\rrbracket=\llbracket\Phi(e_1),e_3,e_4\rrbracket,
$$
we obtain
$$
a_{14}=a_{34}=0.
$$
\item \textbf{Finally, from}
$$
\llbracket \Phi(e_2),e_1,e_1\rrbracket=\llbracket e_2,\Phi(e_1),e_1\rrbracket,
$$
we deduce
$$
a_{12}=0.
$$
\end{itemize}
\medskip
\noindent
By collecting all the obtained constraints, the matrix representation of $\Phi$ is of the following form:

$$
QCent(\mathcal{L}_4^1)
=
\left\{\left(\begin{array}{ccccc}
a_{11}&0&0&0\\
a_{21}&a_{22}&a_{23}&0\\
0&0&a_{11}&a_{24}\\
a_{41}&a_{42}&a_{43}&a_{44}
\end{array}
\right), a_{ij}\in\mathbb{K}\right\}.
$$
which proves the claimed description of $QCent(\mathcal{L}_4^1)$ and therefore the result follows.
\end{proof}

The following table summarizes the explicit matrix descriptions and dimensions of centroids $Cent(\mathcal{L})$ and quasi-centroids
$QCent(\mathcal{L})$ for all four-dimensional non-abelian ternary Leibniz algebras $\mathcal{L}$.

\begin{longtable}{||c||c||c||c||c||c||c||c||c||c||c||c||}
\caption{Four-dimentionnal cas}
\\ \hline
\textbf{$\mathcal{L}$}&\textbf{$Cent(\mathcal{L})$}&\textbf{Dim}&\textbf{$QCent(\mathcal{L})$}&\textbf{Dim}
\\ \hline
$\mathcal{L}_4^{1}$&
 $\left(\begin{array}{cccc}
a_{11}&0&0&0\\
0&a_{11}&0&0\\
0&0&a_{11}&0\\
0&0&0&a_{11}
\end{array}
\right)$
&
1
&
$\left(\begin{array}{ccccc}
a_{11}&0&0&0\\
0&a_{11}&0&0\\
a_{31}&a_{32}&a_{11}&a_{34}\\
a_{41}&a_{42}&a_{43}&a_{11}
\end{array}
\right)$
&
9
\\ \hline 
$\mathcal{L}_4^{2}$&
 $\left(\begin{array}{cccc}
a_{11}&0&0&0\\
0&a_{11}&0&0\\
a_{31}&a_{32}&c_{11}&0\\
-a_{31}&-a_{32}&0&a_{11}
\end{array}
\right)$
&
3
&
$\left(\begin{array}{ccccc}
a_{11}&0&0&0\\
a_{21}&a_{22}&a_{23}&a_{24}\\
0&0&a_{11}&0\\
a_{41}&a_{42}&a_{43}&a_{44}
\end{array}
\right)$
&
9
\\ \hline 
$\mathcal{L}_4^{3}$&
 $\left(\begin{array}{cccc}
a_{11}&0&0&0\\
0&a_{11}&0&0\\
0&0&a_{11}&0\\
0&0&0&a_{11}
\end{array}
\right)$
&
1
&
$\left(\begin{array}{ccccc}
a_{11}&0&0&0\\
0&a_{11}&0&0\\
0&0&a_{11}&0\\
0&0&0&a_{11}
\end{array}
\right)$
&
9
\\ \hline
$\mathcal{L}_4^{4}$&
 $\left(\begin{array}{cccc}
a_{11}&0&0&0\\
0&a_{11}&0&0\\
0&0&a_{11}&0\\
0&0&0&a_{11}
\end{array}
\right)$
&
1
&
$\left(\begin{array}{ccccc}
a_{11}&0&0&0\\
0&a_{11}&0&0\\
a_{31}&a_{32}&a_{33}&a_{34}\\
0&0&0&a_{11}
\end{array}
\right)$
&
5
\\ \hline 
$\mathcal{L}_4^{5}$&
 $\left(\begin{array}{cccc}
a_{11}&0&0&0\\
0&a_{11}&0&0\\
0&0&a_{11}&0\\
0&0&0&a_{11}
\end{array}
\right)$
&
1
&
$\left(\begin{array}{ccccc}
a_{11}&0&0&0\\
0&a_{11}&0&0\\
0&0&a_{11}&0\\
a_{41}&a_{42}&a_{43}&a_{44}
\end{array}
\right)$
&
5
\\ \hline 
$\mathcal{L}_4^{6}$&
 $\left(\begin{array}{cccc}
a_{11}&0&0&0\\
0&a_{11}&0&0\\
0&0&a_{11}&0\\
0&0&0&a_{11}
\end{array}
\right)$
&
1
&
$\left(\begin{array}{ccccc}
a_{11}&0&0&0\\
0&a_{11}&0&0\\
a_{31}&a_{32}&a_{33}&a_{34}\\
0&0&0&a_{11}
\end{array}
\right)$
&
5
\\ \hline 
$\mathcal{L}_4^{7\alpha}$&
 $\left(\begin{array}{cccc}
a_{44}&0&0&0\\
0&a_{44}&0&0\\
0&0&a_{44}&0\\
0&0&0&a_{44}
\end{array}
\right)$
&
1
&
$\left(\begin{array}{ccccc}
a_{22}&0&0&0\\
0&a_{22}&0&0\\
0&0&a_{22}&0\\
0&0&0&a_{22}
\end{array}
\right)$
&
1
\\ \hline 
$\mathcal{L}_4^{8}$&
 $\left(\begin{array}{cccc}
a_{11}&0&0&0\\
0&a_{11}&0&0\\
0&0&a_{11}&0\\
0&0&0&a_{11}
\end{array}
\right)$
&
1
&
$\left(\begin{array}{ccccc}
a_{11}&0&0&0\\
a_{21}&a_{22}&a_{23}&a_{24}\\
0&0&a_{11}&0\\
0&0&0&a_{11}
\end{array}
\right)$
&
5
\\ \hline 
$\mathcal{L}_4^{9}$&
 $\left(\begin{array}{cccc}
a_{11}&0&0&0\\
0&a_{11}&0&0\\
0&0&a_{11}&0\\
0&0&0&a_{11}
\end{array}
\right)$
&
1
&
$\left(\begin{array}{ccccc}
a_{11}&0&0&0\\
a_{21}&a_{22}&a_{23}&a_{24}\\
0&0&a_{11}&0\\
0&0&0&a_{11}
\end{array}
\right)$
&
5
\\ \hline 
$\mathcal{L}_4^{10}$&
 $\left(\begin{array}{cccc}
a_{11}&0&0&0\\
0&a_{11}&0&0\\
0&0&a_{11}&0\\
0&0&0&a_{11}
\end{array}
\right)$
&
1
&
$\left(\begin{array}{ccccc}
a_{11}&0&0&0\\
0&a_{11}&0&0\\
0&0&a_{11}&0\\
0&0&0&a_{11}
\end{array}
\right)$
&
1
\\ \hline 
$\mathcal{L}_4^{11}$&
 $\left(\begin{array}{cccc}
a_{11}&0&0&0\\
0&a_{11}&0&0\\
0&0&a_{11}&0\\
0&0&0&a_{11}
\end{array}
\right)$
&
1
&
$\left(\begin{array}{ccccc}
a_{11}&0&0&0\\
0&a_{11}&0&0\\
0&0&a_{33}&0\\
a_{41}&a_{42}&a_{43}&a_{44}
\end{array}
\right)$
&
6
\\ \hline
$\mathcal{L}_4^{12}$&
 $\left(\begin{array}{cccc}
a_{11}&0&0&0\\
0&a_{11}&0&0\\
0&0&a_{11}&0\\
0&0&0&a_{11}
\end{array}
\right)$
&
1
&
$\left(\begin{array}{ccccc}
a_{11}&0&0&0\\
0&a_{11}&0&0\\
0&0&a_{33}&0\\
0&0&0&a_{11}
\end{array}
\right)$
&
2
\\ \hline 
$\mathcal{L}_4^{13}$&
 $\left(\begin{array}{cccc}
a_{11}&0&0&0\\
0&a_{11}&0&0\\
0&0&a_{11}&0\\
0&0&0&a_{11}
\end{array}
\right)$
&
1
&
$\left(\begin{array}{ccccc}
a_{11}&0&0&0\\
0&a_{11}&0&0\\
0&0&a_{11}&0\\
0&0&0&a_{11}
\end{array}
\right)$
&
1
\\ \hline 
$\mathcal{L}_4^{14}$&
 $\left(\begin{array}{cccc}
a_{11}&0&0&0\\
0&a_{11}&0&0\\
0&0&a_{11}&0\\
0&0&0&a_{11}
\end{array}
\right)$
&
1
&
$\left(\begin{array}{ccccc}
a_{11}&0&0&0\\
0&a_{11}&0&0\\
0&0&a_{11}&0\\
a_{41}&a_{42}&a_{43}&a_{44}
\end{array}
\right)$
&
5
\\ \hline 
$\mathcal{L}_4^{15}$&
 $\left(\begin{array}{cccc}
a_{11}&0&0&0\\
0&a_{11}&0&0\\
0&0&a_{11}&0\\
0&0&0&a_{11}
\end{array}
\right)$
&
1
&
$\left(\begin{array}{ccccc}
a_{11}&0&0&0\\
0&a_{11}&0&0\\
0&0&a_{11}&0\\
0&0&0&a_{11}
\end{array}
\right)$
&
1
\\ \hline 
$\mathcal{L}_4^{16}$&
 $\left(\begin{array}{cccc}
a_{11}&c_{12}&a_{13}&0\\
0&a_{22}&0&0\\
0&0&a_{22}&0\\
a_{22}-a_{11}&-a_{12}&-a_{13}&a_{22}
\end{array}
\right)$
&
5
&
$\left(\begin{array}{ccccc}
a_{11}&0&0&0\\
0&a_{11}&0&0\\
0&0&a_{11}&0\\
a_{41}&a_{42}&a_{43}&a_{44}
\end{array}
\right)$
&
5
\\ \hline 
$\mathcal{L}_4^{17}$&
 $\left(\begin{array}{cccc}
a_{11}&c_{12}&a_{13}&0\\
0&a_{22}&0&0\\
0&0&a_{22}&0\\
a_{22}-a_{11}&-a_{12}&-a_{13}&a_{22}
\end{array}
\right)$
&
5
&
$\left(\begin{array}{ccccc}
a_{11}&0&0&0\\
0&a_{11}&0&0\\
0&0&a_{11}&0\\
a_{41}&a_{42}&a_{43}&a_{44}
\end{array}
\right)$
&
5
\\ \hline 
$\mathcal{L}_4^{18}$&
 $\left(\begin{array}{cccc}
a_{11}&0&0&0\\
0&a_{11}&0&0\\
0&0&a_{11}&0\\
0&0&0&a_{11}
\end{array}
\right)$
&
1
&
$\left(\begin{array}{ccccc}
a_{11}&0&0&0\\
0&a_{11}&0&0\\
a_{31}&a_{32}&a_{33}&a_{34}\\
0&0&0&a_{11}
\end{array}
\right)$
&
5
\\ \hline 
\end{longtable}

\newpage
\begin{re}
\begin{itemize}\,
\item $Dim(Cent(\mathcal{L}))\leq Dim(QCent(\mathcal{L})).$
	\item The dimensions of the \textbf{centroids} of $4$-dimensional ternary Leibniz algebras range between 1 and 5.
	\item The dimensions of the \textbf{quasi-centroids} of $4$-dimensional ternary Leibniz algebras range between 1 and 9.
\end{itemize}
\end{re}

\section*{Conclusion}

In this work, we classify all four-dimensional ternary Leibniz algebras and determine explicitly their centroids and quasi-centroids. 
For each non-abelian algebra, these structures are described in matrix form and their dimensions computed, providing insight into the
 internal symmetries of low-dimensional ternary Leibniz algebras. 
Our results highlight the structural richness of these algebras and extend classical results from the binary setting, opening 
perspectives for the study of centroids, quasi-centroids, and related structures in higher-dimensional and $n$-ary algebras.\\

	\noindent {\bf Acknowledgment:}
	The authors would like to thank the referee for valuable comments and suggestions on this article. \\
	
	\noindent {\bf Conflicts of Interest:} The authors declare no conflict of interest.

\end{document}